\title{Hyperfiniteness for group actions on trees}
\author[Srivatsav Kunnawalkam Elayavalli]{Srivatsav Kunnawalkam Elayavalli}
\address{Department of Mathematical Sciences, UCSD, 9500 Gilman Dr, La Jolla, CA 92092, USA}
\email{skunnawalkamelayaval@ucsd.edu}
\urladdr{https://sites.google.com/view/srivatsavke}
\author{Koichi Oyakawa}
\address{Department of Mathematics\\Vanderbilt University, 1326 Stevenson Center, Station B 407807, Nashville, TN 37240, USA}
\email{koichi.oyakawa@vanderbilt.edu}
\urladdr{https://sites.google.com/view/koichi-oyakawa}
\author{Forte Shinko} 
\address{University of California, Berkeley, Department of Mathematics, 970 Evans Hall, Berkeley, CA 94720, USA}
\email{forteshinko@berkeley.edu}
\urladdr{https://math.berkeley.edu/~forte/}
\author{Pieter Spaas}
\address{Department of Mathematical Sciences, University of Copenhagen, Universitetsparken 5, DK-2100 Copenhagen \O, Denmark}
\email{pisp@math.ku.dk}
\urladdr{https://sites.google.com/view/pieterspaas}
\thanks{S.K.E was supported by a Simons Postdoctoral Fellowship. 
P.S. was partially supported by a research grant from the Danish Council for Independent Research, Natural Sciences, and partially by MSCA Fellowship No. 101111079 from the European Union. }}
\begin{document}

\begin{abstract}
    We identify natural conditions for a countable group acting on a countable tree which imply that the orbit equivalence relation of the induced action on the Gromov boundary is Borel hyperfinite. Examples of this condition include acylindrical actions. We also identify a natural weakening of the aforementioned conditions that implies measure hyperfiniteness of the boundary action. We then document examples of group actions on trees whose boundary action is not hyperfinite.
\end{abstract}

\maketitle
\section{Introduction}

Recently there has been a trend of proving Borel hyperfiniteness for
orbit equivalence relations associated to natural actions of countable groups.
This project sits at the intersection between descriptive set theory,
ergodic theory and geometric group theory.
While it is a major open problem if all actions of amenable groups
give rise to hyperfinite equivalence relations
(see \cite{CJMST23} for partial results and further references),
the problem of identifying hyperfiniteness for natural actions
associated to non-amenable groups is also of great interest
and is seen to be tractable in certain cases. 

Presently we are inspired by the following results recorded in chronological order,
that summarize the activity in the aforementioned problem.
First Dougherty, Jackson and Kechris in \cite{DJK94} showed that
the tail equivalence relation is hyperfinite,
which implies that the action of $\mathbb{F}_2$ on the Gromov boundary
of its Cayley graph gives a Borel hyperfinite equivalence relation.
Then Huang, Sabok and Shinko \cite{HSS20}
generalized the above result to certain actions of hyperbolic groups.
This was pushed to all hyperbolic groups in \cite{MS20},
with a new proof given recently in \cite{NV23}.
Other examples include certain actions of mapping class groups \cite{PS21},
relatively hyperbolic groups acting naturally on the Bowditch boundary \cite{Kar22},
and in the general setting of acylindrically hyperbolic groups \cite{Oya24}. 

The first main result of this note is the following:
\begin{mthm}\label{mthm:A}
    Let $G \car T$ be an action of a countable group on a countable tree,
    and suppose that every geodesic ray $\mathbf v$ in $T$
    has an initial segment $\sigma$ with
    $\Stab(\sigma) = \Stab(\mathbf v)$.
    Then the induced action of $G$ on the Gromov boundary $\partial T$
    is Borel hyperfinite.
\end{mthm}

Note that the tree need not be locally finite and the action need not be free.
An example of this is when the action is acylindrical (see \cite{Sel97, Osi18}),
a very well studied notion in geometric group theory.
Indeed acylindricity implies that every geodesic ray
has an initial segment with finite stabilizer.
The proof of the above main result relies on reducing the problem to tail equivalence,
which was shown to be hyperfinite in \cite{DJK94}.

An important particular case of acylindrical actions on trees
is the action of the fundamental group of a closed orientable 3-manifold $M$
on the Bass-Serre tree associated with the JSJ decomposition of $M$
(see Lemma 2.4 in \cite{WZ10}).
If $M$ is not virtually a torus bundle over a circle,
our above result implies hyperfiniteness;
otherwise, $\pi_1(M)$ is virtually polycyclic and hyperfiniteness
follows from \cite[Corollary 1.9]{CJMST23}.
Thus we get the following
(this application of \Cref{mthm:A} was pointed out by Denis Osin):

\begin{cor}\label{osin}
    Suppose that $M$ is a closed, orientable, irreducible $3$-manifold.
    Then the action of $\pi_1(M)$ on the Bass-Serre tree $T$
    associated with the JSJ decomposition of $M$
    induces a Borel hyperfinite equivalence relation on the Gromov boundary of $T$.
\end{cor}

We also identify a natural weakening of the condition in \Cref{mthm:A} for the action
so that the induced action on the boundary is measure-hyperfinite,
in fact Borel 2-amenable.
The following is the next main result:

\begin{mthm}\label{mthm:B}
    Let $G \car T$ be an action of a countable group on a countable tree
    such that every geodesic ray $\mathbf v$
    has an initial segment $\sigma$ such that
    $\Stab(\mathbf v)$ is uniformly coamenable
    (see \Cref{uca}) in $\Stab(\sigma)$.
    Then the induced action of $G$ on the Gromov boundary $\partial T$
    is Borel $2$-amenable,
    so in particular measure-hyperfinite.
\end{mthm}

A question that arises naturally from the above results is the following:
Do there exist countable group actions on trees for which
the induced equivalence relation on the Gromov boundary is \emph{not} Borel hyperfinite.
We answer this question affirmatively and collect some remarks in \Cref{prodiscrete}.
In particular, we identify natural such examples arising from
actions on trees of certain inverse limits of countable groups.

\subsection*{Acknowledgements}
We thank Kyoto University for hosting three of the authors for the 8th KTGU Mathematics Workshop for Young Researchers,
where the idea for this note was conceived.
We thank Adrian Ioana, Andrew Marks and Damian Osajda for helpful comments.
We thank Denis Osin for suggesting \Cref{osin}.

\section{Preliminaries}
\subsection{Equivalence relations}
Let $E$ be an equivalence relation on a set $X$,
and let $A \subseteq X$.
We say that $A$ is a \textbf{transversal} for $E$
if $A$ meets every $E$-class exactly once.
We say that $A$ is \textbf{$E$-invariant}
if it is a union of $E$-classes.
The \textbf{saturation} of $A$,
denoted $[A]_E$,
is the smallest $E$-invariant set containing $A$.
We write $E \uhr A$ for the restriction of $E$ to $A$.

Given a monoid action $M \car X$,
its \textbf{orbit equivalence relation},
denoted $E_M^X$,
is the equivalence relation on $X$
generated by the relation
$\{(x, mx) \in X^2 : x \in X, m \in M\}$.
We write $X/M$ to mean $X/E_M^X$.

A \textbf{countable Borel equivalence relation (CBER)}
is an equivalence relation $E$ on a standard Borel space $X$,
such that $E$ is Borel as a subset of $X^2$,
and such that every $E$-class is countable.

A CBER $E$ on a standard Borel space $X$ is \textbf{smooth}
if the following equivalent conditions hold
(see \cite[Proposition 3.12]{Kec24}):
\begin{enumerate}
    \item $E$ has a Borel transversal.
    \item The quotient space $X/E$ is standard Borel.
\end{enumerate}

Let $E$ and $F$ be CBERs on standard Borel spaces $X$ and $Y$ respectively.
We say that $E$ \textbf{Borel reduces} to $F$,
written $E \le_B F$,
if there is a Borel map $f : X \to Y$
such that for all $x, x' \in X$,
we have $x \mr E x' \iff f(x) \mr F f(x')$.
Note that Borel reduction is a preorder on CBERs.
We say that $E$ is \textbf{Borel bireducible} with $F$,
written $E \sim_B F$,
if the following equivalent conditions hold
(see \cite[Theorem 3.32]{Kec24}):
\begin{enumerate}
    \item $E \le_B F$ and $F \le_B E$.
    \item There is a Borel map $X \to Y$
        which descends to a bijection $X/E \to Y/F$.
\end{enumerate}

We will make use of the following observation.
A monoid action $M \car X$ is \textbf{countable-to-one}
if for every $x \in X$,
there are only countably many $(m, x') \in M \times X$
with $mx' = x$.

\begin{lem}\label{quotient}
    Let $M$ and $N$ be countable monoids,
    and let $M \times N \car X$ be a countable-to-one
    Borel action on a standard Borel space
    such that $E_N^X$ is smooth.
    Then $E_{M \times N}^X \sim_B E_M^{X/N}$.
\end{lem}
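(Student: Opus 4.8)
The plan is to show that the quotient map $q\colon X\to X/N$ associated to the smooth equivalence relation $E_N^X$ is itself a Borel reduction from $E_{M \times N}^X$ to $E_M^{X/N}$ which descends to a bijection on orbit spaces; by the quotient-bijection characterization of $\sim_B$ recorded above, this is exactly what must be established. First I would use smoothness of $E_N^X$ to fix a Borel transversal $T\subseteq X$ together with its associated Borel selector, which we also denote $q\colon X\to T$, so that $q(x)\mathrel{E_N^X}x$ for all $x$, $q(x)=q(y)\iff x\mathrel{E_N^X}y$, and $q\uhr T=\mathrm{id}_T$; we identify $X/N$ with $T$ via $q$, and write $mu$ for the action of $(m,1)\in M\times N$ on $u$.

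The key structural point is that the action of $M$ descends to $X/N$. Since the $M$- and $N$-coordinates of $M\times N$ commute, applying any $m\in M$ to an $E_N^X$-zigzag from $x$ to $y$ produces an $E_N^X$-zigzag from $mx$ to $my$; hence $E_N^X$ is $M$-invariant, $(m,t)\mapsto q(mt)$ defines a Borel action of $M$ on $T$, and the same commutation yields the equivariance $q(mx)=q\bigl(m\,q(x)\bigr)$ for all $x\in X$ and $m\in M$. Because each $(m,1)\in M\times N$ acts by a countable-to-one Borel map and $E_N^X$-classes are countable, this induced action on $T$ is again countable-to-one, so $E_M^{X/N}$, like $E_{M \times N}^X$, is a CBER and the statement is meaningful.

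The heart of the argument is the equivalence, for all $x,y\in X$,
\[
x\mathrel{E_{M \times N}^X}y\ \Longleftrightarrow\ q(x)\mathrel{E_M^{X/N}}q(y).
\]
For $\Leftarrow$: first, for $t,t'\in T$, $t\mathrel{E_M^{X/N}}t'$ implies $t\mathrel{E_{M \times N}^X}t'$, because each generator step $t\mapsto q(mt)$ downstairs satisfies $q(mt)\mathrel{E_N^X}mt\mathrel{E_{M \times N}^X}t$; chaining $x\mathrel{E_N^X}q(x)\mathrel{E_{M \times N}^X}q(y)\mathrel{E_N^X}y$ then gives $x\mathrel{E_{M \times N}^X}y$. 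For $\Rightarrow$, given an $E_{M \times N}^X$-zigzag from $x$ to $y$, it suffices to project each step. For a step $w'=(m,n)w$, write $(m,n)w=(m,1)\cdot(1,n)w$; since $(1,n)w\mathrel{E_N^X}w$, the equivariance gives $q(w')=q\bigl(m\,q(w)\bigr)$, which is precisely the image of $q(w)\in T$ under one generator step of the induced $M$-action, so $q(w')\mathrel{E_M^{X/N}}q(w)$; the reversed step $w=(m,n)w'$ is symmetric. Transitivity along the zigzag yields $q(x)\mathrel{E_M^{X/N}}q(y)$.

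The displayed equivalence says exactly that $q$ is a Borel reduction $E_{M \times N}^X\le_B E_M^{X/N}$; since $q$ maps onto $T$ and fixes $T$ pointwise, it descends to a bijection $X/E_{M \times N}^X\to(X/N)/E_M^{X/N}$, whence $E_{M \times N}^X\sim_B E_M^{X/N}$. The only genuine content is the two pieces of commutation bookkeeping---$M$-invariance of $E_N^X$, and the fact that a single $(m,n)$-step projects to a single $m$-step downstairs---together with the mild care needed to handle orbit equivalence relations of monoids (rather than groups) via zigzags; Borelness, countability of the classes, and the extraction of the selector are all routine, so I expect those commutation steps to be the only real obstacle.
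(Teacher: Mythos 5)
Your proof is correct and is essentially the paper's own argument: the paper simply observes that the quotient map $X \twoheadrightarrow X/N$ descends to a bijection $X/(M\times N) \to (X/N)/M$ and invokes the quotient-bijection characterization of $\sim_B$, which is exactly what you establish. You have merely filled in the routine commutation and selector details that the paper leaves implicit.
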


\begin{proof}
    The quotient map $X \thra X/N$
    descends to a bijection $X/(M \times N) \hthra (X/N)/M$.
\end{proof}

A CBER $E$ is \textbf{Borel hyperfinite}
if $E = \bigcup_i E_i$
for some increasing sequence $(E_i)_{i \in \N}$ of CBERs,
each of which has all classes finite.

\subsection{Trees}
A \textbf{tree} is a connected acyclic undirected simple graph.

Let $T$ be a countable tree.
A \textbf{geodesic} (resp. \textbf{geodesic ray}) in $T$
is a finite (resp. infinite)
injective sequence of vertices of $T$
such that subsequent terms are adjacent in $T$.
Given a geodesic $\sigma$ and a geodesic ray $\mbf v$,
we write $\sigma \prec \mbf v$
if $\sigma$ is a strict initial segment of $\mbf v$
(so $\sigma$ is necessarily finite).
Let $\Geo(T)$ denote the standard Borel space of geodesic rays in $T$,
viewed as a Borel subset of $V(T)^\N$, where $V(T)$ is the vertex set of $T$.
There is an action $\N \car \Geo(T)$ defined by
$1 \cdot (v_0, v_1, v_2, \ldots)
= (v_1, v_2, v_3, \ldots)$.
The quotient space $\Geo(T)/\N$
is called the \textbf{boundary} of $T$,
and is denoted $\partial T$.
This is a standard Borel space since $E_\N^{\Geo(T)}$ is smooth:
for a fixed vertex $v \in T$,
the set of geodesic rays starting at $v$ is a Borel transversal.
The action $\Aut(T) \car T$ commutes with the action $\N \car T$,
so it descends to a Borel action $\Aut(T) \car \partial T$.

\section{Borel hyperfinite boundary actions}
\begin{lem}\label{smoothUnion}
    Let $E$ be a CBER on a standard Borel space $X$,
    and let $(A_n)_{n \in \N}$ be a cover of $X$ by Borel sets.
    If each $E \uhr A_n$ is smooth,
    then so is $E$.
\end{lem}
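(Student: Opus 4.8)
The goal is to show that a CBER $E$ on $X$ is smooth given a countable Borel cover $(A_n)_{n \in \N}$ with each $E \uhr A_n$ smooth. The plan is to use the characterization of smoothness via Borel transversals (equivalently, via a Borel separating family, or a Borel reduction to equality on a standard Borel space). First I would replace the cover by a Borel partition: set $B_0 = A_0$ and $B_n = A_n \setminus \bigcup_{k < n} A_k$, so that the $B_n$ are pairwise disjoint Borel sets covering $X$, and $E \uhr B_n \le_B E \uhr A_n$ is still smooth (smoothness passes to Borel subsets, since a Borel transversal intersected with the subset, or rather the restriction of the separating family, still works).

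For each $n$, smoothness of $E \uhr B_n$ gives a Borel map $f_n \colon B_n \to \R$ (or into any fixed uncountable standard Borel space) with $x \mr E x' \iff f_n(x) = f_n(x')$ for $x, x' \in B_n$. The key step is to assemble these into a single Borel reduction $f \colon X \to \N \times \R$ witnessing smoothness of $E$ — but this does not quite work directly, because two points in different pieces $B_m, B_n$ can still be $E$-equivalent, and $f$ as naively defined ($x \mapsto (n, f_n(x))$ for $x \in B_n$) would fail to identify them. The fix is to saturate: note that each saturation $[B_n]_E$ is Borel (since $E$ is a CBER, saturations of Borel sets are Borel), and on $[B_n]_E$ the equivalence relation $E$ is smooth — indeed, every $E$-class meeting $[B_n]_E$ meets $B_n$, so one can push the transversal/reduction structure from $B_n$ out to its saturation along $E$ using a Borel selector for the countable-to-one map $[B_n]_E \to B_n/E$ (available since $E$ is countable Borel, via the Lusin–Novikov uniformization). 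So I would instead first prove the statement with $A_n$ replaced by $[A_n]_E$, reducing to the case where the cover is by $E$-invariant Borel sets.

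Now assume the $A_n$ are $E$-invariant; partition as above into $E$-invariant Borel pieces $B_n$ (differences of invariant sets are invariant), each with $E \uhr B_n$ smooth, witnessed by Borel $f_n \colon B_n \to \R$. Then $f \colon X \to \N \times \R$ defined by $f(x) = (n, f_n(x))$ for the unique $n$ with $x \in B_n$ is Borel, and since the $B_n$ are $E$-invariant we get $x \mr E x' \iff f(x) = f(x')$ for all $x, x' \in X$. As $\N \times \R$ is standard Borel, this shows $E \le_B \Delta_{\N \times \R}$, hence $X/E$ is standard Borel, i.e.\ $E$ is smooth.

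The main obstacle is the passage from a general cover to an invariant one: one must know that $E$-saturations of Borel sets are Borel and that smoothness transfers from $B_n$ to $[B_n]_E$. Both follow from standard facts about CBERs — Lusin–Novikov uniformization gives a Borel transversal-like selection for the countable-to-one projection $[B_n]_E \to B_n/(E \uhr B_n)$, and composing a Borel reduction of $E \uhr B_n$ with such a selector extends it to $[B_n]_E$. Once this is in hand, everything else is bookkeeping with disjointification and assembling Borel maps.
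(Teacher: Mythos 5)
Your proof is correct and follows the same outline as the paper's: saturate each $A_n$, pass to an invariant partition, and combine the witnesses on the pieces. The only difference is that the paper works with Borel transversals rather than Borel reductions to equality, which makes the saturation step immediate—a Borel transversal for $E \uhr A_n$ is already a Borel transversal for $E \uhr [A_n]_E$, since every class in the saturation meets $A_n$—so no appeal to Lusin--Novikov is needed there.
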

\begin{proof}
    Each $E \uhr [A_n]_E$ is smooth,
    since any Borel transversal for $E \uhr A_n$
    is also a Borel transversal for $E \uhr [A_n]_E$.
    So by replacing each $A_n$ with its saturation,
    we can assume that each $A_n$ is $E$-invariant.
    By passing to subsets,
    we can further assume that $(A_n)_{n \in \N}$ is a partition.
    Then each $E \uhr A_n$ has a Borel transversal,
    and the union of these transversals is a Borel transversal for $E$.
\end{proof}

We can now prove \Cref{mthm:A} from the introduction.

\begin{thm}\label{thm:BorelHyp}
    Let $G \car T$ be an action of a countable group on a countable tree
    such that every geodesic ray $\mathbf v$ in $T$
    has an initial segment $\sigma$ with
    $\Stab(\mathbf v) = \Stab(\sigma)$.
    Then $E_G^{\partial T}$ is Borel hyperfinite.
\end{thm}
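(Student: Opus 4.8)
The plan is to pass from the boundary to infinite geodesics via Lemma~\ref{quotient}, reduce the resulting equivalence relation to tail equivalence, and quote \cite{DJK94}; the crucial point, and the only place the hypothesis on stabilizers enters, is a smoothness statement. First observe that the shift $\N \car \Geo(T)$ commutes with $G \car \Geo(T)$, so the two combine into a Borel action $G \times \N \car \Geo(T)$, which is countable-to-one because $G \times \N$ is countable. Since $E_\N^{\Geo(T)}$ is smooth with quotient $\partial T$ (as recorded in the preliminaries), Lemma~\ref{quotient} gives $E_G^{\partial T} \sim_B E_{G \times \N}^{\Geo(T)}$. I will prove below that $E_G^{\Geo(T)}$ is smooth; granting that, a second application of Lemma~\ref{quotient} with the roles of the two factors exchanged yields $E_{G \times \N}^{\Geo(T)} \sim_B E_\N^{\Geo(T)/G}$. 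The latter is the orbit equivalence relation of the shift descended to the standard Borel space $\Geo(T)/G$; writing $\bar s$ for that map, the injection $z \mapsto (z, \bar s z, \bar s^2 z, \ldots)$ is a Borel reduction onto a Borel subset of the tail equivalence relation on $(\Geo(T)/G)^\N$, which is hyperfinite by \cite{DJK94}. Hence $E_\N^{\Geo(T)/G}$, and therefore $E_G^{\partial T}$, is hyperfinite.

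So everything comes down to showing $E_G^{\Geo(T)}$ is smooth. Given $\mathbf v = (v_0, v_1, \ldots) \in \Geo(T)$, let $\sigma(\mathbf v)$ be the shortest finite initial segment of $\mathbf v$ with $\Stab(\sigma(\mathbf v)) = \Stab(\mathbf v)$: this exists by hypothesis, and it is unique since the finite initial segments of $\mathbf v$ are linearly ordered by $\prec$. As $\Stab(\sigma) \supseteq \Stab(\mathbf v)$ holds for every initial segment $\sigma$, the equality $\Stab(\sigma) = \Stab(\mathbf v)$ is equivalent to requiring $g v_n = v_n$ for all $g \in \Stab(\sigma)$ and all $n$, a closed condition on $\mathbf v$ for each fixed $\sigma$; hence $\mathbf v \mapsto \sigma(\mathbf v)$ is a Borel map from $\Geo(T)$ into the \emph{countable} set of finite geodesics. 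Moreover it is $G$-equivariant, $\sigma(g\mathbf v) = g \cdot \sigma(\mathbf v)$, because an automorphism preserves initial segments together with their lengths and satisfies $\Stab(g\tau) = g\,\Stab(\tau)\,g^{-1}$.

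Now fix a set $R$ of representatives of the $G$-orbits of finite geodesics — a countable set — and for $\tau \in R$ put $A_\tau := \{\mathbf v \in \Geo(T) : \sigma(\mathbf v) \in G \cdot \tau\}$; by the previous paragraph each $A_\tau$ is Borel and $G$-invariant, and they cover $\Geo(T)$. By Lemma~\ref{smoothUnion} it is enough to check that each $E_G^{\Geo(T)} \uhr A_\tau$ is smooth, and I claim $B_\tau := \{\mathbf v : \sigma(\mathbf v) = \tau\}$ is a Borel transversal for it. It is Borel; it meets every $E_G$-class contained in $A_\tau$, since if $\sigma(\mathbf v) = g_0 \tau$ then $g_0^{-1} \mathbf v$ lies in $B_\tau$ and in the class of $\mathbf v$; and it meets each such class at most once, for if $\mathbf v, \mathbf v' \in B_\tau$ with $\mathbf v' = g \mathbf v$, then $g\tau = g\,\sigma(\mathbf v) = \sigma(\mathbf v') = \tau$, so $g \in \Stab(\tau) = \Stab(\mathbf v)$ — the last equality being the defining property of $\sigma$, as $\sigma(\mathbf v) = \tau$ — whence $\mathbf v' = g\mathbf v = \mathbf v$. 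This completes the proof that $E_G^{\Geo(T)}$ is smooth.

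The step I expect to be the crux is precisely that uniqueness argument, i.e.\ the smoothness of $E_G^{\Geo(T)}$: one should \emph{not} expect $g \cdot \sigma(\mathbf v) = \sigma(\mathbf v')$ on its own to force $g\mathbf v = \mathbf v'$ (two distinct geodesics can share an initial segment and then diverge to different ends), and it is exactly the equality $\Stab(\sigma(\mathbf v)) = \Stab(\mathbf v)$ — the hypothesis of the theorem — that kills this ambiguity, by making the stabilizer of the chosen finite initial segment already trivialize on the whole ray. Everything else is routine bookkeeping; note in particular that neither local finiteness of $T$ nor freeness of the action is used anywhere.
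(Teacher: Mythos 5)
Your proof is correct and follows essentially the same route as the paper: reduce everything to the smoothness of $E_G^{\Geo(T)}$, apply Lemma~\ref{quotient} twice to pass between $E_G^{\partial T}$, $E_{G\times\N}^{\Geo(T)}$ and $E_\N^{\Geo(T)/G}$, and quote \cite{DJK94} for the last one. The only divergence is in the smoothness step, where the paper argues more directly: it covers $\Geo(T)$ by the closed sets $\Geo_\sigma(T)=\{\mathbf v : \sigma \prec \mathbf v,\ \Stab(\mathbf v)=\Stab(\sigma)\}$ over all finite geodesics $\sigma$ and notes that $E_G^{\Geo(T)}\uhr\Geo_\sigma(T)=E_{\Stab(\sigma)}^{\Geo_\sigma(T)}$ is trivial (the action of $\Stab(\sigma)$ on $\Geo_\sigma(T)$ is trivial since $\Stab(\mathbf v)=\Stab(\sigma)$ there), so your valid but more elaborate construction of the minimal segment $\sigma(\mathbf v)$, its $G$-equivariance, and an explicit transversal can be dispensed with.
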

\begin{proof}
    For every geodesic $\sigma$,
    let $\Geo_\sigma(T)$ be the set of geodesic rays $\mathbf v$ extending $\sigma$
    such that $\Stab(\mathbf v) = \Stab(\sigma)$.
    Then $(\Geo_\sigma(T))_\sigma$ is a cover of $\Geo(T)$ by our condition and for any geodesic $\sigma$, 
    $\Geo_\sigma(T)$ is closed.
    Furthermore, for every $\sigma$,
    we have
    $E_G^{\Geo(T)} \uhr \Geo_\sigma(T)
    = E_{\Stab(\sigma)}^{\Geo_\sigma(T)}$.
    Now we see that the action $\Stab(\sigma) \car \Geo_\sigma(T)$ is trivial, since $\Stab(\mathbf v) = \Stab(\sigma)$ for any $\mathbf v \in \Geo_\sigma(T)$,
    hence this equivalence relation is smooth.
    We conclude that $E_G^{\Geo(T)}$ is smooth by \Cref{smoothUnion}, 
    and thus $E_\N^{\Geo(T)/G}$ is Borel hyperfinite by \cite[Corollary 8.2]{DJK94}.
    By applying \Cref{quotient} twice,
    we have $E_G^{\partial T} \sim_B E_{G \times \N}^{\Geo(T)} \sim_B E_\N^{\Geo(T)/G}$.
    Since hyperfiniteness is closed under Borel reduction
    (see \cite[Proposition 1.3(ii)]{JKL02}),
    we get that $E_G^{\partial T}$ is Borel hyperfinite.
\end{proof}

\begin{eg}
    A natural example of an action satisfying the condition from \Cref{thm:BorelHyp} is any amalgamated free product $G = H *_C K$
    acting on its Bass-Serre tree (\cite{Ser80}),
    when the amalgam $C$ is almost malnormal.
    Indeed,
    in that case $gCg^{-1}\cap C$ is finite for every $g\in G\setminus C$ by definition,
    which means that the stabilizer of any segment of length at least $2$ is finite.
\end{eg}

\section{Measure-hyperfinite boundary actions}
Let $A$ be a set.
View $A$ as a subset of $\ell^1(A)$,
by treating elements of $\ell^1(A)$
as formal $\R$-linear combinations of elements of $A$.
A \textbf{probability measure} on $A$
is a non-negative element $p \in \ell^1(A)$ with $\|p\|_1 = 1$.
Let $\Prob(A)$ denote the set of probability measures on $A$.
Any function $A \times B \to C$ extends linearly
to a function $\Prob(A) \times \Prob(B) \to \Prob(C)$.
In particular,
if $M$ is a monoid,
then $\Prob(M)$ is a monoid,
and every monoid action $M \car X$ extends
to an action $\Prob(M) \car \Prob(X)$.

A CBER $E$ on $X$ is \textbf{Borel amenable}
if there is a sequence $(p_i)_{i \in \N}$,
where each $p_i$ is a function taking every $x \in X$
to some $p_i^x \in \Prob([x]_E)$,
such that
\[
    \forall (x, y) \in E\; 
    \forall \ve > 0 \;
    \forall^\infty i \;
    \|p_i^x - p_i^y\|_1 < \ve
\]
(where $\forall^\infty i$ means ``for all but finitely many $i$'')
and such that for every $i \in \N$,
the function $E \to \R$ defined by
$(x, y) \mapsto p_i^x(\{y\})$ is Borel.

A CBER $E$ on $X$ is \textbf{Borel $2$-amenable}
if there is a sequence $(p_{i, j})_{i, j \in \N}$,
where each $p_{i, j}$ is a function taking every $x \in X$
to some $p_{i, j}^x \in \Prob([x]_E)$,
such that
\[
    \forall (x, y) \in E\; 
    \forall \ve > 0 \;
    \forall^\infty i \;
    \forall^\infty j \;
    \|p_{i, j}^x - p_{i, j}^y\|_1 < \ve,
\]
and such that for every $(i, j) \in \N^2$,
the function $E \to \R$ defined by
$(x, y) \mapsto p_{i, j}^x(\{y\})$ is Borel.

Let $\Phi \in \{\text{hyperfinite}, \text{amenable}, \text{$2$-amenable}\}$,
and let $E$ be a CBER on a standard Borel space $X$.
Given a Borel probability measure $\mu$ on $X$,
we say that $E$ is \textbf{$\mu$-$\Phi$}
if there is a $\mu$-conull subset $X' \subseteq X$ 
such that $E \uhr X'$ is ``Borel $\Phi$''.
We say that $E$ is \textbf{measure-$\Phi$}
if $E$ is $\mu$-$\Phi$
for every Borel probability measure $\mu$ on $X$.
If we require $X'$ to be invariant,
we get an a priori stronger condition for being $\mu$-$\Phi$ for a single $\mu$.
However, if we allow ourselves to pass to a stricter measure,
e.g. $\sum \frac{1}{2^n} g_n \mu$
where $G=(g_n)_n$ is a group whose action generates $E$,
the $E$-saturation of a null set will remain a null set,
and hence we could assume $X'$ to be $E$-invariant.

The following is essentially in the proof of \cite[2.13(ii)]{JKL02},
see also \cite[Theorem 9.21]{Kec24}.

\begin{prop}\label{CFW}
    Let $E$ be a CBER on a standard Borel space $X$.
    If $\mu$ is a Borel probability measure on $X$
    and $E$ is $\mu$-$2$-amenable,
    then $E$ is $\mu$-hyperfinite.
    In particular,
    if $E$ is Borel $2$-amenable,
    then $E$ is measure-hyperfinite.
\end{prop}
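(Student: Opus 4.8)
The plan is to reduce to the classical Connes--Feldman--Weiss theorem, which states that a measure-amenable CBER is measure-hyperfinite. So the heart of the matter is: if $E$ is $\mu$-$2$-amenable, then $E$ is $\mu$-amenable; the ``in particular'' clause then follows by quantifying over all $\mu$ and invoking CFW. Thus I would first reduce the proposition to the statement that $\mu$-$2$-amenability implies $\mu$-amenability, and then spend the rest of the proof on that implication.

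For the implication, suppose we are given a $2$-amenable sequence $(p_{i,j})_{i,j}$ with the property that for all $(x,y) \in E$, all $\ve > 0$, for all but finitely many $i$, and then for all but finitely many $j$, we have $\|p_{i,j}^x - p_{i,j}^y\|_1 < \ve$. The natural idea is to extract a single-indexed sequence by a diagonal argument: choose a function $j \mapsto i(j)$ growing slowly enough, or rather choose for each $i$ an averaged measure $q_i^x := \frac{1}{N_i}\sum_{j=1}^{N_i} p_{i,j}^x$ for a suitable cutoff $N_i$, and hope that $(q_i)$ or some reindexing witnesses $\mu$-amenability. The subtlety is that the ``for all but finitely many $j$'' threshold depends on the pair $(x,y)$ and on $\ve$, so no fixed choice of $N_i$ works pointwise for every pair simultaneously. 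This is where measurability and the measure $\mu$ enter: by a standard Egorov/exhaustion argument one upgrades the pointwise ``$\forall^\infty j$'' convergence to uniform convergence off a set of small measure. Concretely, fix a countable group $\Gamma = (g_n)_n$ whose action generates $E$; for each $n$ and each $i$, the functions $x \mapsto \|p_{i,j}^x - p_{i,j}^{g_n x}\|_1$ converge to something small as $j \to \infty$ on the conull set where $g_n x \mr E x$, so by Egorov they converge uniformly off a set $Z_{n,i}$ of measure $< 2^{-n-i}$; letting $Z = \bigcup_{n,i} Z_{n,i}$ and working on the conull $E$-invariant set $X \setminus [Z]_E$ (using the stricter-measure trick recalled in the text to make $[Z]_E$ null), one can now choose cutoffs $N_i \to \infty$ so that $q_i^x := \frac{1}{N_i}\sum_{j \le N_i} p_{i,j}^x$ satisfies $\|q_i^x - q_i^{g_n x}\|_1 \to 0$ for each $n$, which gives $\mu$-amenability of $E$ restricted to that invariant conull set.

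The main obstacle, and the step I would be most careful about, is precisely this extraction of a single-index amenable sequence: making the diagonalization work requires simultaneously handling the dependence of the $j$-threshold on the pair $(x,y)$ (handled via Egorov against the generating group, which is countable so only countably many pairs-of-generators matter) and on $\ve$ (handled by a further diagonalization over a sequence $\ve_k \to 0$), while keeping the resulting functions $(x,y) \mapsto q_i^x(\{y\})$ Borel — the latter is automatic since finite averages of Borel functions are Borel. One must also check the triangle-inequality bookkeeping: from control of $\|q_i^x - q_i^{g_n x}\|_1$ for generators one gets control of $\|q_i^x - q_i^y\|_1$ for all $(x,y) \in E$ by writing $y = g_{n_k}\cdots g_{n_1} x$ and telescoping, at the cost of finitely many error terms, which is fine since we only need ``for all but finitely many $i$''. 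Finally, I would note that once $\mu$-amenability is established on a conull invariant set, Connes--Feldman--Weiss (as stated, e.g., in \cite{Kec23}) yields $\mu$-hyperfiniteness on that set, hence $E$ is $\mu$-hyperfinite; and since $\mu$ was arbitrary, Borel $2$-amenability implies measure-hyperfiniteness.
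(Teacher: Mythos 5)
Your overall strategy coincides with the paper's: reduce the proposition to showing that $\mu$-$2$-amenability implies $\mu$-amenability and then invoke Connes--Feldman--Weiss, the real content being the extraction of a single-index Reiter sequence from the doubly-indexed one. The paper does this extraction by choosing a function $f:\N\to\N$ via a Borel--Cantelli argument on a finite measure on $X\times G\times\N$ and taking $(p_{i,f(i)})_i$; your Egorov-plus-averaging variant is in the same spirit and can be made to work, with the averaging being an unnecessary but harmless complication.

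There is, however, one step that fails as written. You set $Z=\bigcup_{n,i}Z_{n,i}$ with $\mu(Z_{n,i})<2^{-n-i}$ and claim that $X\setminus[Z]_E$ is conull. It is not: $\sum_{n,i}2^{-n-i}$ is a constant of order $1$, so the union $Z$ need not be null, let alone its saturation; rescaling the bounds to $\delta\,2^{-n-i}$ only yields amenability on a set of measure $>1-\delta$, which does not by itself give $\mu$-amenability. The correct use of the summability of the $\mu(Z_{n,i})$ is the Borel--Cantelli conclusion: $\mu$-almost every $x$ lies in only finitely many of the sets $Z_{n,i}$, so for each generator $g_n$ the uniform threshold $J(n,i)$ applies to $x$ for all but finitely many $i$ --- and since the definition of amenability only requires the estimate for all but finitely many $i$, this suffices. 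This is exactly the role Borel--Cantelli plays in the paper's proof. A second, more minor, imprecision: for fixed $i$ and $n$ the quantities $\|p_{i,j}^x-p_{i,j}^{g_nx}\|_1$ need not converge as $j\to\infty$ (the hypothesis only controls their limsup, and only says that limsup is small for large $i$), so Egorov should be applied to the decreasing tail suprema $\sup_{j'\ge j}\|p_{i,j'}^x-p_{i,j'}^{g_nx}\|_1$, whose limit is a function of $x$ tending to $0$ as $i\to\infty$. With these two repairs your argument goes through and is essentially the paper's.
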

\begin{proof}
    By the Connes-Feldman-Weiss theorem \cite{CFW81},
    it suffices to show that $E$ is $\mu$-amenable.
    By the Feldman-Moore theorem \cite{FM77},
    there is a countable group $G$
    and a Borel action $G \car X$ such that $E = E_G^X$.
    Since $E$ is $\mu$-$2$-amenable,
    we have
    \begin{equation}\label{eq:2amenable}
        \forall^\mu x \;
        \forall g \in G \;
        \forall \ve > 0 \;
        \forall^\infty i \;
        \forall^\infty j \;
        \|p_{i, j}^x - p_{i, j}^{gx}\|_1 < \ve.
    \end{equation}
    
    We claim that there exists a function $f : \N \to \N$ such that
    \begin{equation}\label{eq:f}
        \forall^\mu x \;
        \forall g \in G \;
        \forall \ve > 0 \;
        \forall^\infty i \;
        \forall j \ge f(i) \;
        \|p_{i, j}^x - p_{i, j}^{gx}\|_1 < \ve.
    \end{equation}
    Indeed, let $G=\{g_1,g_2,\cdots\}$ be an enumeration of the elements of $G$ and define a Borel probability measure $\nu$ on $X\times G \times \N$ satisfying $\nu|_{X\times\{(g_n,m)\}}=\frac{1}{2^{nm}}\mu$ for any $(n,m)\in\N^2$. For $i,j\in\N$, we define
    \[
    A_{ij}=\{(x,g,m)\in X\times G \times \N \mid \forall j'\ge j, ~ \|p_{i, j'}^x - p_{i, j'}^{gx}\|_1 < \frac{1}{m} \}.
    \]
    Since $(A_{ij})_{j\in\N}$ is increasing and $\nu(X\times G \times \N)<\infty$, for each $i\in\N$, there exists $f(i)\in\N$ such that the set $A_i=\bigcup_{j\in\N}A_{ij} \setminus A_{if(i)}$ satisfies $\nu(A_i)<\frac{1}{2^i}$. By the Borel-Cantelli lemma, the set $A=\bigcap_{k\in\N}\bigcup_{i\ge k}A_i$ is $\nu$-null. Define $A_{(g,m)}=\{x\in X \mid (x,g,m)\in A \}$ for each $(g,m)\in G\times\N$ and $B=\bigcup_{(g,m)\in G\times \N}A_{(g,m)}$. Note that for $x\notin B$, we have by definition that for each $(g,m)\in G\times \N$, $(x,g,m)\notin A$, i.e. $(x,g,m)\notin A_{i}$ for all but finitely many $i$. In other words, together with \eqref{eq:2amenable} this implies
    \begin{equation}\label{eq:B}
        \forall^\mu x \in X\setminus B \;
        \forall g \in G \;
        \forall m \in \N \;
        \forall^\infty i \;
        \forall j \ge f(i) \;
        \|p_{i, j}^x - p_{i, j}^{gx}\|_1 < \frac{1}{m}.
    \end{equation}
    Meanwhile, $B$ is $\mu$-null since $\mu(A_{(g_n,m)})=2^{nm} \nu(A_{(g_n,m)})\le 2^{nm} \nu(A) = 0$. Hence, \eqref{eq:B} implies \eqref{eq:f}. By \eqref{eq:f}, $(p_{i, f(i)})_{i \in \N}$ witnesses $\mu$-amenability of $E$.
\end{proof}

\begin{defn}
    Let $G \car X$ be a group action.
    Given a finite subset $S \subset G$ and $\ve > 0$,
    an \textbf{$(S, \ve)$-Reiter function}
    is a probability measure $p$ on $G$
    such that for every $s \in S$ and every $x \in X$,
    we have $\|px - psx\|_1 < \ve$ (where $py\in \Prob(G\cdot y)$ denotes the pushforward measure under the map $g\mapsto gy$).
\end{defn}

\begin{defn}\label{uca}
    Let $G$ be a group.
    A subgroup $H \le G$ is \textbf{uniformly coamenable}
    if the action $G \car G/H$ satisfies that
    for every finite $S \subset G$ and every $\ve > 0$,
    there is an $(S, \ve)$-Reiter function.
\end{defn}

Natural examples of uniformly coamenable subgroups include coamenable normal subgroups. 

\begin{lem}\label{uniformlyCoamenable}
    Let $G \car X$ be a Borel action
    of a countable group on a standard Borel space.
    If every stabilizer is uniformly coamenable in $G$,
    then $E_G^X$ is Borel amenable.
\end{lem}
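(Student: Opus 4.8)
The plan is to produce, for each pair $(S, \ve)$ with $S \subset G$ finite and $\ve > 0$, a Borel family $x \mapsto p^x \in \Prob([x]_{E_G^X})$ that is ``almost $S$-invariant'' in the sense needed for Borel amenability, and then to assemble these into the required sequence $(p_i)_{i \in \N}$ by running over an exhausting sequence of finite sets $S_i \nearrow G$ with $\ve_i \to 0$. The key idea is that an $(S, \ve)$-Reiter function $p \in \Prob(G)$, which exists by uniform coamenability applied to the action $G \car G/\Stab(x)$ (note all stabilizers are uniformly coamenable by hypothesis, but crucially the Reiter function can be chosen \emph{independently of $x$}, since the defining condition of uniform coamenability quantifies over all points of $G/H$ at once), can be pushed forward along the orbit map $g \mapsto gx$ to give $p^x := px \in \Prob(G\cdot x) = \Prob([x]_{E_G^X})$. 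The almost-invariance $\|p^x - p^{sx}\|_1 = \|px - p(sx)\|_1 < \ve$ for $s \in S$ then follows directly: applying the Reiter condition for the action $G \car G/\Stab(x)$ at the point $\Stab(x) \in G/\Stab(x)$ gives $\|p \cdot \Stab(x) - ps \cdot \Stab(x)\|_1 < \ve$ in $\Prob(G/\Stab(x))$, and this pushes forward to the corresponding inequality in $\Prob(G\cdot x)$ along the $G$-equivariant surjection $G/\Stab(x) \to G\cdot x$.

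Concretely, I would first fix a single probability measure $p \in \Prob(G)$ that is an $(S,\ve)$-Reiter function simultaneously for the action $G \car G/H$ for \emph{every} subgroup $H \le G$ that arises as a stabilizer — but since the quantifier ``for every $x \in X$'' in Definition \ref{uca} already ranges over a fixed action, the cleanest route is: for $H = \Stab(x)$, uniform coamenability gives an $(S,\ve)$-Reiter function $p_{x}$ on $G$ for $G \car G/H$; however to get a \emph{Borel} dependence I want $p$ not to depend on $x$. This is where I'd use that the condition in the definition of uniformly coamenable is about $G \car G/H$ with the \emph{same} $p$ working for all cosets; since there are only countably many subgroups of the countable group $G$ (in fact we only care about those that are stabilizers), and for each such $H$ the set of $(S,\ve)$-Reiter functions is nonempty, I can in fact first replace $\ve$ by $\ve/2$ and observe that a convex combination argument is not needed — instead, I recall that uniform coamenability of $H$ is equivalent to: for all finite $S$, $\ve>0$, there is $p$ with $\|p - ps\|_1 < \ve$ as measures on $G$ \emph{after} projecting to $G/H$, i.e., $\|pH - psH\|_1 < \ve$ in $\ell^1(G/H)$. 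The map $X \to \{\text{stabilizers}\}$, $x \mapsto \Stab(x)$, is Borel and countably-valued, so I can partition $X$ into countably many Borel pieces $X_H = \{x : \Stab(x) = H\}$, choose one Reiter function $p_H$ per piece, and define $p^x := p_H x$ for $x \in X_H$; this is Borel in $x$ (for fixed target $y$, $p^x(\{y\})$ is a countable Borel combination).

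Then for the Borel amenability witness: enumerate $G = \{g_1, g_2, \dots\}$, set $S_i = \{g_1, \dots, g_i\}$ and $\ve_i = 1/i$, let $p_i^x$ be the construction above for $(S_i, \ve_i)$. For $(x,y) \in E_G^X$ write $y = g x$ with $g = g_k$ for some $k$; then for all $i \ge k$ we have $g \in S_i$, so $\|p_i^x - p_i^{gx}\|_1 < \ve_i = 1/i \to 0$, giving the displayed condition $\forall (x,y) \in E\, \forall \ve > 0\, \forall^\infty i\ \|p_i^x - p_i^y\|_1 < \ve$. Borelness of each $(x,y) \mapsto p_i^x(\{y\})$ holds by the partition argument. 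I expect the main obstacle to be the Borel measurability bookkeeping — specifically verifying that a single choice of Reiter function can be made to depend Borel-measurably (equivalently, only through $\Stab(x)$) on $x$ — rather than the almost-invariance estimate, which is immediate from the pushforward. A secondary point to be careful about is that the pushforward of $p_H \in \Prob(G)$ to $\Prob(G \cdot x)$ factors through $\Prob(G/H)$ exactly because $H = \Stab(x)$, so the norm estimate transfers without loss; this uses that pushforward along any map is a contraction (in fact an isometry onto its image support here only after the quotient), and that is exactly the content of the parenthetical in Definition \ref{uca}.
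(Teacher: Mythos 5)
Your core idea---push an $(S,\ve)$-Reiter function for $G \car G/\Stab(x)$ forward along the orbit map to get $p^x \in \Prob([x]_{E_G^X})$---is exactly the paper's, but your implementation has two genuine gaps. First, the countability claim is false: a countable group can have uncountably many subgroups (already $\mathbb{F}_2$, or $\bigoplus_n \mathbb{Z}/2\mathbb{Z}$, has $2^{\aleph_0}$ of them), and a Borel action can realize uncountably many of them as stabilizers, so $x \mapsto \Stab(x)$ need not be countably-valued and you cannot partition $X$ into countably many Borel pieces $X_H$. Second, and more seriously, even on a single orbit your construction breaks: if $y = gx$ then $\Stab(y) = g\Stab(x)g^{-1}$ is in general a \emph{different} subgroup $H'$, so $p^x = p_H x$ and $p^y = p_{H'} y$ are pushforwards of two independently chosen measures. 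The Reiter property only controls $\|p_H x - p_H(gx)\|_1$; it says nothing about $\|p_H(gx) - p_{H'}(gx)\|_1$, so the displayed almost-invariance does not follow. (Your computation $\|p^x - p^{sx}\|_1 = \|px - p(sx)\|_1$ silently uses the same $p$ at $x$ and at $sx$.) The alternative you float and then drop---a single $p$ that is Reiter for \emph{every} stabilizer simultaneously---need not exist, since uniform coamenability gives no uniformity across subgroups.

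Both problems are repaired by one device, which is what the paper does: fix an enumeration of the countable set $\Prob_*(G)$ of finitely supported $\Q$-valued probability measures, and let $p_n^x$ be the \emph{first} element of $\Prob_*(G)$ that is an $(S_n, 1/n)$-Reiter function for $G \car G/\Stab(x)$ (such an element exists by density). This choice is Borel in $x$, because for finitely supported $p$ the Reiter condition is a countable conjunction of conditions that depend on $x$ only through the Borel data $\{g \in G : gx = x\}$; and it is constant on orbits, because $G \car G/\Stab(x)$ and $G \car G/\Stab(gx)$ are isomorphic $G$-actions and therefore have the same set of Reiter functions, hence the same first one. With $p_n^x = p_n^{gx}$ in hand, your pushforward estimate and the diagonalization over $(S_i, 1/i)$ go through as you wrote them.
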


\begin{proof}
    Denote by $\Prob_*(G)$ the subset of $\Prob(G)$
    consisting of finitely supported $\Q$-valued probability measures.
    Fix an ordering on $\Prob_*(G)$ isomorphic to $(\N, <)$,
    and fix an increasing sequence $(S_n)_{n \in \N}$
    of finite subsets of $G$ such that $\bigcup_n S_n = G$.

    For every $n \in \N$ and $x \in X$,
    let $p_n^x$ be the first (in the ordering)
    element of $\Prob_*(G)$
    which is an $(S_n, 1/n)$-Reiter function for $G \car G/\Stab(x)$;
    this exists since $\Prob_*(G)$ is dense in $\Prob(G)$.
    Set $q_n^x = p_n^x x$.

    Now fix $x \in X$ and $g \in G$.
    Since $p_n^x = p_n^{gx}$ and $\|p_n^x x - p_n^x gx\|<\frac{1}{n}$ if $g\in S_n$,
    we have
    \[
        \|q_n^x - q_n^{gx}\|
        = \|p_n^x x - p_n^{gx} gx\|
        = \|p_n^x x - p_n^x gx\|
        \to 0.
    \]
    Thus $(q_n)_{n \in \N}$ witnesses Borel amenability of $G \car X$.
\end{proof}

\begin{lem}\label{amenableUnion}
    Let $E$ be a CBER on a standard Borel space $X$,
    and let $(A_n)_{n \in \N}$ be a cover of $X$ by Borel sets.
    If each $E \uhr A_n$ is Borel amenable,
    then so is $E$.
\end{lem}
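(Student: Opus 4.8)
The plan is to mimic the structure of the proof of Lemma~\ref{smoothUnion}, replacing ``smooth'' and ``Borel transversal'' with ``Borel amenable'' and ``family of Reiter-type functions'' throughout. The key point is that Borel amenability, like smoothness, is preserved under passing from a Borel set $A_n$ to its saturation $[A_n]_E$, and is additive over a Borel partition into invariant pieces. So the skeleton of the argument is: first reduce to the case where the $A_n$ are $E$-invariant, then reduce to the case where they form a partition, and finally patch the witnesses together.

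First I would observe that if $E \uhr A$ is Borel amenable, then so is $E \uhr [A]_E$. This is the main step that needs a small argument rather than a one-line citation. The cleanest way: the map $[A]_E \to A$ sending $x$ to (a Borel choice of) some point of $A$ in its $E$-class is a Borel reduction of $E \uhr [A]_E$ to $E \uhr A$ — such a Borel selector exists because $A$ is a Borel complete section of $E \uhr [A]_E$, using the Lusin--Novikov uniformization. Then invoke the fact that Borel amenability is closed under Borel reduction (this is standard; alternatively one can pull back the Reiter functions $p_i^{f(x)}$ along $f$ directly, noting that the finite sets $[x]_E \cap f^{-1}(\{f(x)\})$ cause no trouble since one can average or just transport the measure $p_i^{f(x)} \in \Prob([f(x)]_E)$ to $\Prob([x]_E)$ via any fixed Borel injection of each class into itself fixing the selected point — but the reduction formulation is cleaner).

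Next, with each $A_n$ replaced by $[A_n]_E$, I would pass to the Borel partition $B_0 = A_0$, $B_n = A_n \setminus \bigcup_{k<n} A_k$; each $B_n$ is $E$-invariant and $E \uhr B_n$ is Borel amenable as a restriction of the amenable $E \uhr A_n$ to an invariant Borel subset (restriction to invariant subsets preserves amenability, since one just restricts the witnessing family). Finally, for each $n$ let $(p_i^{(n)})_{i}$ witness Borel amenability of $E \uhr B_n$; define $p_i^x = p_i^{(n)}(x)$ for the unique $n$ with $x \in B_n$. Since the $B_n$ are $E$-invariant, for $(x,y) \in E$ both lie in the same $B_n$, so the defining convergence condition for amenability is inherited pointwise, and Borelness of $(x,y) \mapsto p_i^x(\{y\})$ holds because $E = \bigsqcup_n E \uhr B_n$ is a Borel partition of $E$ on which the map agrees with a Borel function. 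This completes the proof.

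I do not expect any serious obstacle; the only step requiring genuine care is the passage to saturations, where one must produce the Borel selector $[A_n]_E \to A_n$ and either cite closure of Borel amenability under Borel reduction or spell out the transport of probability measures. Everything else is bookkeeping identical to Lemma~\ref{smoothUnion}.
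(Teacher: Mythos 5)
Your proof is correct and follows essentially the same route as the paper: saturate each $A_n$, pass to an invariant Borel partition, and patch the witnessing functions together. The only difference is that the paper simply cites \cite[Proposition 2.15]{JKL02} for each of the three steps (closure under saturation, restriction, and countable invariant unions), whereas you prove them by hand via a Lusin--Novikov selector and closure under Borel reduction, which is fine.
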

\begin{proof}
    By \cite[Proposition 2.15(iv)]{JKL02},
    each $E \uhr [A_n]_E$ is Borel amenable,
    so by replacing each $A_n$ with $[A_n]_E$,
    we can assume that each $A_n$ is $E$-invariant.
    By \cite[Proposition 2.15(i)]{JKL02},
    by passing to subsets,
    we can further assume that $(A_n)_{n \in \N}$ is a partition.
    Then we are done by \cite[Proposition 2.15(vi)]{JKL02}.
\end{proof}

We are now ready to prove \Cref{mthm:B} from the introduction.

\begin{thm}\label{amenableTheorem}
    Let $G \car T$ be an action of a countable group on a countable tree
    such that every geodesic ray $\mathbf v$
    has an initial segment $\sigma$ such that
    $\Stab(\mathbf v)$ is uniformly coamenable in $\Stab(\sigma)$.
    Then $E_G^{\partial T}$ is Borel $2$-amenable,
    so in particular measure-hyperfinite.
\end{thm}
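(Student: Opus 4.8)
The plan is to mimic the structure of the proof of Theorem~\ref{thm:BorelHyp}, replacing ``smoothness'' of the pieces with ``Borel amenability'' and tracking the extra quantifier that ``$2$-amenable'' carries. First I would, for each finite geodesic $\sigma$, let $\Geo_\sigma(T)$ be the set of infinite geodesics $\mbf v$ extending $\sigma$ with $\Stab(\mbf v)$ uniformly coamenable in $\Stab(\sigma)$; by hypothesis $(\Geo_\sigma(T))_\sigma$ is a Borel cover of $\Geo(T)$, and on each piece we have $E_G^{\Geo(T)} \uhr \Geo_\sigma(T) = E_{\Stab(\sigma)}^{\Geo_\sigma(T)}$. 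The action $\Stab(\sigma) \car \Geo_\sigma(T)$ has every stabilizer equal to some $\Stab(\mbf v)$, which is uniformly coamenable in $\Stab(\sigma)$, so by Lemma~\ref{uniformlyCoamenable} this restricted equivalence relation is Borel amenable; then by Lemma~\ref{amenableUnion}, $E_G^{\Geo(T)}$ is Borel amenable.

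Next I would push this down to the boundary. By Lemma~\ref{quotient} applied twice (using that $E_\N^{\Geo(T)}$ is smooth, and that $E_G^{\Geo(T)}$ being Borel amenable implies $E_G^{\Geo(T)}$ is in particular countable-to-one as an action so the smoothness of the $\N$-part survives passing to the quotient), we get $E_G^{\partial T} \sim_B E_{G \times \N}^{\Geo(T)} \sim_B E_\N^{\Geo(T)/G}$. So it suffices to show that $E_\N^{\Geo(T)/G}$ is Borel $2$-amenable, given that $E_G^{\Geo(T)}$ is Borel amenable and $E_\N^{\Geo(T)}$ is smooth (equivalently finite, since $\N$-orbits on $\Geo(T)$ are... well, the tail action — here one should be slightly careful, as $E_\N^{\Geo(T)}$ is smooth but not finite; still $E_\N^{\Geo(T)/G}$ is the relevant object). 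The key structural fact I want is: if $X$ carries commuting Borel actions of $\N$ and $G$ with $G$ countable, $E_G^X$ Borel amenable, and $E_\N^X$ smooth, then $E_\N^{X/G}$ is Borel $2$-amenable. The two indices $i$ and $j$ come from the two layers: the $j$-index will run the Reiter/averaging witnesses for the $G$-amenability (lifted through the quotient map), and the $i$-index will run a Følner-type exhaustion for the tail relation $\N \car X/G$, which is hyperfinite hence $1$-amenable by \cite[Corollary 8.2]{DJK94}. Combining a $1$-amenable sequence in one variable with a $1$-amenable sequence in the other, using that the two commute sufficiently, yields a $2$-amenable sequence.

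Concretely, to build the $2$-amenable witness on $\Geo(T)/G$: fix a Borel section $s : \Geo(T)/G \to \Geo(T)$ of the quotient map $\pi : \Geo(T) \thra \Geo(T)/G$ (it exists since $E_G^{\Geo(T)}$ is a CBER, so it is smooth on a... no — instead use that $\pi$ admits a Borel transversal-selector on each $G$-class since $G$ is countable, i.e. pick the $\prec$-least representative geodesic). For a point $[\mbf v] \in \Geo(T)/G$ and indices $i,j$, let $p_i^{[\mbf v]}$ be the $i$-th hyperfinite-amenability measure for $E_\N^{\Geo(T)/G}$ supported on the tail class of $[\mbf v]$, and let $p_{i,j}^{[\mbf v]} = \sum_{[\mbf w]} p_i^{[\mbf v]}(\{[\mbf w]\})\, \pi_*\!\left(q_j^{s([\mbf w])}\right)$, where $q_j$ is the $j$-th Borel-amenability witness for $E_G^{\Geo(T)}$ from the construction above, pushed forward along $\pi$. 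For fixed $i$, letting $j \to \infty$ makes $\pi_*(q_j^{s([\mbf w])})$ approach $\delta_{[\mbf w]}$ in a suitable averaged sense relative to any fixed finite part, so $p_{i,j}^{[\mbf v]}$ converges to $p_i^{[\mbf v]}$; and the $i$-layer gives convergence along the tail relation. One checks the Borel-cocycle condition in both orders of $\forall^\infty$, which is exactly the $2$-amenability inequality. Finally, since Borel $2$-amenability is closed under Borel bireducibility (any Borel map collapsing $[A]_E$ into $A$ compatibly is a reduction, and the witness transports), $E_G^{\partial T}$ is Borel $2$-amenable, and then Proposition~\ref{CFW} gives measure-hyperfiniteness.

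The main obstacle I expect is the passage from ``$E_G^{\Geo(T)}$ Borel amenable plus $E_\N^{\Geo(T)}$ smooth'' to ``$E_\N^{\Geo(T)/G}$ Borel $2$-amenable'' — that is, correctly formulating and proving the lemma that a hyperfinite quotient of a Borel-amenable relation along a smooth complementary action is Borel $2$-amenable, and in particular verifying the double-$\forall^\infty$ uniformity (the $j$-threshold being allowed to depend on $i$ and $\varepsilon$ but not on the point is what makes $2$-amenability rather than plain amenability the right target, exactly as in the delicate point addressed in Proposition~\ref{CFW}). One must also be careful that the Reiter functions produced by Lemma~\ref{uniformlyCoamenable} on each piece $\Geo_\sigma(T)$ glue into a single global Borel-amenability witness for $E_G^{\Geo(T)}$ via Lemma~\ref{amenableUnion}, whose proof routes through \cite[Proposition 2.15]{JKL02}; this is routine but is where the $2$-amenability (as opposed to amenability) of the final relation genuinely enters, since the quotient construction degrades one $\forall^\infty$ into two.
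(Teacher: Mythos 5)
The first half of your argument --- the cover by the sets $\Geo_\sigma(T)$, the identification $E_G^{\Geo(T)} \uhr \Geo_\sigma(T) = E_{\Stab(\sigma)}^{\Geo_\sigma(T)}$, and the use of Lemmas \ref{uniformlyCoamenable} and \ref{amenableUnion} to conclude that $E_G^{\Geo(T)}$ is Borel amenable --- is exactly the paper's argument, except that you assert rather than prove that each $\Geo_\sigma(T)$ is Borel (the paper gives a short explicit verification via closed sets $A_{n,p}$).

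The second half has a genuine gap. You apply Lemma \ref{quotient} a second time to obtain $E_{G\times\N}^{\Geo(T)} \sim_B E_\N^{\Geo(T)/G}$ and then build a $2$-amenability witness on $\Geo(T)/G$ using a Borel section of the quotient map. But that direction of Lemma \ref{quotient} requires $E_G^{\Geo(T)}$ to be \emph{smooth}, and under the hypotheses of this theorem it is only Borel amenable: the quotient $\Geo(T)/G$ need not be standard Borel at all, and a Borel selector picking ``the least representative geodesic'' in each $G$-class would be a Borel transversal, whose existence is equivalent to smoothness --- you in fact notice this mid-construction and do not resolve it. (This is precisely the point of difference with Theorem \ref{thm:BorelHyp}, where smoothness of $E_G^{\Geo(T)}$ was established first, making the double application of Lemma \ref{quotient} legitimate.) You correctly isolate the needed structural fact --- an amenable relation extended by a smooth/hyperfinite one is $2$-amenable --- but your formulation of it on the non-standard quotient cannot be carried out, and no proof of it is supplied. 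The paper avoids the quotient entirely: it stays on $\Geo(T)$, observes that $E_{G\times\N}^{\Geo(T)}$ is generated by the Borel amenable $E_G^{\Geo(T)}$ together with the smooth $E_\N^{\Geo(T)}$, and invokes \cite[Proposition 2.15(ix)]{JKL02} to conclude that $E_{G\times\N}^{\Geo(T)}$ is Borel $2$-amenable; a single application of Lemma \ref{quotient} gives $E_G^{\partial T} \sim_B E_{G\times\N}^{\Geo(T)}$, and closure of Borel $2$-amenability under Borel reduction finishes the proof. To repair your argument, replace the passage to $\Geo(T)/G$ by this upstairs argument (or prove the JKL-type extension lemma directly on $\Geo(T)$).
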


\begin{proof}
    For every geodesic $\sigma$,
    let $\Geo_\sigma(T)$ be the set of geodesic rays $\mathbf v$ extending $\sigma$
    such that $\Stab(\mathbf v)$ is uniformly coamenable in $\Stab(\sigma)$.
    Then $(\Geo_\sigma(T))_\sigma$ is a cover of $\Geo(T)$. We also claim that $\Geo_\sigma(T)$ is Borel for any geodesic $\sigma$. Indeed, let $(S_n)_{n\in\N}$ and $\Prob_*(G)$ be as in the proof of \Cref{uniformlyCoamenable} and for each $n\in\N$ and $p \in \Prob_*(G)$, we define
    \[
    A_{n,p}=\{\mathbf{v} \in \Geo(T) \mid \sigma \prec \mathbf{v} \wedge \max_{g\in S_n}\|p\mathbf{v}-pg\mathbf{v}\|_1 \leq \frac{1}{n} \}.
    \]
    Since the action $G\car\Geo(T)$ is an action by homeomorphisms, $A_{n,p}$ is closed. Hence,  $\Geo_\sigma(T)$ is Borel since $\mathbf{v} \in \Geo_\sigma(T) \Leftrightarrow [\forall n\in \N~ \exists p\in \Prob_*(G)~ \mathbf{v} \in A_{n,p}]$. For every $\sigma$,
    we have
    $E_G^{\Geo(T)} \uhr \Geo_\sigma(T)
    = E_{\Stab(\sigma)}^{\Geo_\sigma(T)}$,
   and every stabilizer of the action $\Stab(\sigma)\car \Geo_\sigma(T)$ is uniformly coamenable in $\Stab(\sigma)$ by hypothesis, 
    so this equivalence relation is Borel amenable by \Cref{uniformlyCoamenable}.
    Hence $E_G^{\Geo(T)}$ is Borel amenable by \Cref{amenableUnion}, and 
    thus $E_{G \times \N}^{\Geo(T)}$ is Borel $2$-amenable
    by \cite[Proposition 2.15(ix)]{JKL02}.
    By \Cref{quotient},
    we have $E_G^{\partial T} \sim_B E_{G \times \N}^{\Geo(T)}$.
    Since Borel $2$-amenability is closed under Borel reduction
    (see \cite[Proposition 2.15(ii)]{JKL02}),
    we get that $E_G^{\partial T}$ is Borel $2$-amenable.
    Measure-hyperfiniteness follows from \Cref{CFW}.
\end{proof}

\section{Prodiscrete actions}\label{prodiscrete}
A Borel action of a countable group $G$
on a standard Borel space is \textbf{prodiscrete}
(or \textbf{modular})
if it is the inverse limit of countable $G$-sets,
or equivalently,
if it is isomorphic to
the boundary action $G \car \partial T$
of a rooted action $G \car T$ on a countable tree,
where an action is \textbf{rooted} if it has a fixed point
(see \cite[Fact 1.1]{Kec05}).
See \cite[Section 10.8]{Kec24} for background on prodiscrete actions.

\begin{eg}
    Let $G$ be a countable residually finite group.
    Then $G$ admits a free pmp prodiscrete action
    (see \cite[Fact 1.4]{Kec05}),
    meaning that there is a rooted action $G \car T$ on a countable tree
    and a Borel probability measure $\mu$ on $\partial T$,
    such that the boundary action $G \car \partial T$
    preserves $\mu$ and is $\mu$-almost everywhere free.
    \begin{itemize}
        \item
            If $G$ is non-amenable
            (for instance,
            if $G$ is a non-abelian free group),
            then $E_G^{\partial T}$ is not $\mu$-hyperfinite
            (see \cite[Proposition 2.5(ii)]{JKL02}).
        \item
            If $G$ is antitreeable,
            then $E_G^{\partial T}$ is not $\mu$-treeable.
            A countable group is \textbf{antitreeable}
            if it admits no treeable free pmp actions
            (see \cite[Section 10.7(2)]{Kec24}).
            There are many examples of antitreeable groups,
            for instance,
            any infinite property (T) group,
            or any product of a non-amenable group by an infinite group.
        \end{itemize}
\end{eg}
It is known that prodiscrete actions cannot be arbitrarily complicated,
since for instance,
it was shown in \cite{Hjo12}
that the free part of the Bernoulli shift $F_2 \car 2^{F_2}$
does not Borel reduce to the orbit equivalence relation
of any prodiscrete action
(see \cite[Section 10.8]{Kec24} for more examples,
where they are called ``antimodular'').
It is not known whether this result can be extended to all boundary actions:
\begin{qn}
    If $G \car T$ is an action of a countable group on a countable tree,
    is it true that the free part of the Bernoulli shift $F_2 \car 2^{F_2}$
    does not Borel reduce to $E_G^{\partial T}$?
\end{qn}

\printbibliography
\end{document}